\newtheorem{theorem}{Theorem}
\def\<{\langle}\def\>{\rangle}
\def\B{\mathcal{B}}
\def\A{\mathscr{A}}
\def\D{\mathscr{D}}
\def\H{\mathcal{H}}
\def\R{\mathcal{R}}
\def\leq{\leqslant}
\def\geq{\geqslant}
\def\phi{\varphi}
\begin{document}
\title[A note on commutators]{A note on commutators in algebras \\ of unbounded operators}

\author{Richard V. Kadison}
\address{D.K., Mathematics Department, University of Pennsylvania, U.S. }
\email{kadison@math.upenn.edu}

\author{Zhe Liu}
\address{Z.L., DKU Faculty, Duke Kunshan University, China }
\email{zhe.liu372@dukekunshan.edu.cn}

\author{Andreas Thom}
\address{A.T., Institute of Geometry, TU Dresden, Germany }
\email{andreas.thom@tu-dresden.de}

\maketitle

\begin{abstract}
\noindent We show that the identity is the sum of two commutators in the algebra
of all operators affiliated with a von Neumann algebra of type II$_1$,
settling a question, in the negative, that had puzzled a number of us.
\end{abstract}

\section{Introduction}
A commutator in a ring, an algebraic system, much studied throughout mathematics, that is endowed with operations of addition and multiplication having the usual properties found in the ring of integers (but not necessarily commutativity of multiplication) is an element of the form $ab-ba$, where $a$ and $b$ are elements of the ring. This commutator is often referred to as the (Lie) bracket of $a$ and $b$, and denoted by $[a,b]$ (occasionally as $[b,a]$). Our primary focus is the family of `rings of operators' (acting on a Hilbert space), introduced by von Neumann in [v.N.]. (They were re-named `von Neumann algebras' by J. Dixmier in [Dix] at the suggestion of J. Dieudonn$\rm \acute{e}$.) Commutators and bracketing appear in the study of Lie algebras (and, hence, Lie groups) [C] and in the Newton-Hamilton formulation of classical analytical dynamics.

In [K;S] it is proved that each derivation of a von Neumann algebra is
inner -- the assertion that $H^1(\R,\R)=(0)$, the first cohomology
group of a von Neumann algebra $\R$ with coefficients in itself is $(0)$.
Specifically, each derivation $\delta$ of $\R$ into $\R$ is of the form
$\mathrm{Ad}(B)$, for some $B$ in $\R$; that is, $\delta(A)=\mathrm{Ad}(B)
(A)=AB-BA$. So, the range of $\delta$ consists of commutators ($AB-BA=\delta(A)$).
This suggests extending many of the commutator theorems to theorems about
derivations and their ranges, especially in situations where derivations
need not be inner ($C^*$-algebras, for example). An instance of this can
be found in Sakai's exciting book [S1] and the classic Jacobson paper [J].
In [J] (Lemma 2), it is proved that if $[A,B]$ commutes with $A$, then
$[A,B]$ is a nilpotent. In [S1] (Theorem 2.2.7), it is proved that if
$\delta$ is a derivation of a $C^*$-algebra and $\delta(A)$ commutes
with $A$, where $A$ is a normal element, then $\delta(A)=0$.
The fact that $A$ is normal leads us from ``nilpotent" to 0. Of course,
commutators and the Jacobson result form the background for Sakai's
clever arguments. This discussion makes it evident that derivations
and commutators are closely allied subjects. This is even more evident
when we consider the foundational aspects of quantum dynamics.

The Heisenberg commutation relation, the fundamental relation of
quantum mechanics, $QP-PQ=i\hbar I$, where $Q$ is a position
observable for a particle of the quantum mechanical system,
$P$ is its (conjugate) momentum, and $\hbar=h/2\pi$, where $h$ is
Planck's experimentally determined quantum of action (approximately
$6.625 \times 10^{-27}$ erg sec), is clearly an assertion about a
commutator, $QP-PQ$. It informs us that any mathematical model
suitable for a presentation of quantum physics must be {\it noncommutative}.
At the same time, by virtue of important mathematical arguments (see [Wie]
and [Win]), we are assured that finite matrices are not suitable for the
entire model and even {\it bounded} operators on a Hilbert space will not do.
The model of choice for our quantum mechanical systems are the algebras
of operators on a Hilbert space; especially the $C^*$-algebras and the
von Neumann algebras and their ``affiliated" ({\it unbounded}\/) operators. This
will be discussed further in the next section. Dirac's program [D] associates the bounded observables of some quantum
mechanical system with the self-adjoint operators in a von Neumann algebra
$\R$.  The symmetries of the system (and the associated
conservation laws) correspond to the symmetry
groups as groups of automorphisms of $\R$.  The time-evolution of the
system, with a given dynamics, corresponds to a one-parameter group of
automorphisms, $t\to\alpha_t$ of $\R$.  Loosely speaking, $\alpha_t$
will be $\exp(it\delta)$ (thought of in series terms) for some linear mapping $\delta$ (of the ``algebra"
of observables). Thus $\frac{d(\alpha_t(A))}{dt}\big|_{t=0}=-iHA+iAH=i[A,H]$.
At the same time, $\frac{d(\alpha_t(A))}{dt}\big|_{t=0}=i\delta(A)$. Hence $\delta(A)=[A,H]$.
Here, $H$ is the (quantum) Hamiltonian of the system, which corresponds to
the total energy of the system as the classical Hamiltonian does for a classical
mechanical system. This quantum Hamiltonian ``generates" the time-evolution of
the quantum system, again, as it does for the classical system. The one-parameter
group of automorphisms is ``implemented" by the one-parameter unitary group
$t\to e^{itH}$. (A ``world" of Mathematics underlies all this, including, prominently,
Stone's Theorem [St] and Wigner's Theorem (see [B])  -- and again, cohomology, this time,
of groups.)
For Hamiltonian mechanics, time-differentiation of the dynamical variable is Poisson bracketing with the
Hamiltonian.  In quantum mechanics, differentiation of the
``evolving observable" is Lie bracketing with the (quantum) Hamiltonian. We see that bracketing is a derivation of the system as are the other
generators of the one-parameter automorphism groups that describe our physical system and its symmetries. Of course, we must study those derivations.
\section{Some historical comments}

We have just noted how closely allied the subjects of commutators and derivations
are. At the same time, we indicated some of the ways in which those two subjects
were intertwined. In this section, we give a brief description of the way in which
``Commutators" has developed as an independent mathematical subject. It is interesting
to observe how pure mathematics and pure mathematicians will often take subjects
that originate and develop together and separate them through the process of
abstraction. It's a tribute to pure mathematics and abstraction that the independent
developments of each subject often lead to astonishing results in each, far beyond the
reach of the joint development. The complexity and vitality of this independent
development is often testified to by the inability of practitioners of each of the
subjects to converse with one another about their subjects. One sees this very
clearly, not only where the subjects of commutators and derivations are concerned,
but also when the vast area of mathematical physics, which uses both
commutators and derivations intimately is taken into account; witness our
discussion of the Heisenberg relation, bracketing with the Hamiltonian, and
time-evolution of physical systems.

Perhaps, the first, and the most prominent, result of the theory of commutators is Shoda's
1936 theorem [Sh]: in the algebra of all matrices over a field of characteristic 0,
a matrix has trace 0 if and only if it is a commutator. This theorem was
generalized to matrices over arbitrary fields by A. Albert and
B. Muckenhoupt in 1957 [AM]. Halmos [H] shows that every operator in $\B(\H)$,
where $\H$ is an infinite-dimensional Hilbert space, is a sum of two commutators.
In the same article, Halmos raises questions about the spectra of commutators, which
leads directly to the very important result of Brown and Pearcy [BP1] telling us that
the commutators in $\B(\H)$ are all the operators {\it not} of the form $\lambda I+K$,
where $K$ is compact and $\lambda\neq 0$. (See, also, [FK] for commutators in a
II$_1$ factor, where it is proved that the trace lies in the closed convex hull
of the spectrum, which certainly limits the geometric possibilities for that spectrum.)

In connection with other algebras of operators on a Hilbert space ($C^*$-algebras
and von Neumann algebras), where a ``trace" may be present, as in the
case of finite factors, clearly the trace of a commutator or sums of commutators
will be 0. Is the converse true in a finite factor? Is each trace 0 element a
commutator, or a sum of commutators? As starkly as
they present themselves, these are not easy questions. They have been asked
since 1950. Shoda's result answers this question affirmatively for factors
of type I$_n$ (with $n$ finite, {\it viz.} finite matrix algebras over $\mathbb{C}$,
the complex numbers). It is one of the major questions where factors of type II$_1$
are concerned. For sums of commutators, some splendid work has been done
in recent years by a number of people. Beginning with the seminal work
of Fack and de la Harpe, who show that each trace 0 element is a sum of ten
commutators or fewer, the number of commutators needed for a trace 0 operator
has been steadily decreased. In recent years, Marcoux [M] has come, seemingly,
close to ending the search by showing that two commutators will suffice. But
recall, that even the identity is a sum of two commutators in $\B(\H)$
with $\H$ infinite-dimensional, as Halmos shows us [H], without its being a single
commutator as Wintner and Wielandt show us ([Win], [Wie]). A number of
interesting and important commutator results for special operators
and special classes of operators add valuable information to our
knowledge about these commutator questions. The very nice work of
Dykema and Skripka [DS] is a case in point. Among other things, they
show that trace 0 normal operators, in a II$_1$ factor, with point spectrum are commutators
as are nilpotent operators.

In very recent times, these commutator questions have been raised in
the context of algebras that contain unbounded as well as bounded
operators. In [Z], the family of operators affiliated with a factor
of type II$_1$ is shown to be an associative (non-commutative) * algebra,
so that commutators, derivations and such have very clear meanings.
We call these and those affiliated with finite von Neumann algebras,
in general, Murray-von Neumann algebras. In [Z], it is proved that
the identity cannot be a commutator of two self-adjoint elements in
the Murray-von Neumann algebra affiliated with a II$_1$ factor. This
result is extended to general Murray-von Neumann algebras, those affiliated
with a finite von Neumann algebra, and at the same time, it is noted
that nonzero scalar multiples of the identity cannot be a commutator
of two operators either one of which is self-adjoint. It is still not
known whether the identity is a commutator of two arbitrary elements in
these Murray-von Neumann algebras. In [KL], the following was conjectured:

{\it Let $\R$ be a finite von Neumann algebra. If $p$ is a non-commutative polynomial in $n$ variables
 with the property that, whenever the variables are replaced by operators
 in $\R$ the resulting operator in $\R$ has trace 0, then, whenever replacing
 the variables in $p$ by operators in the Murray-von Neumann algebra affiliated with
 $\R$ produces a bounded operator,
 necessarily in $\R$, that operator has trace 0.}

In the next section, we prove that this conjecture has a negative answer by
showing that the identity is the sum of two commutators in each Murray-von Neumann algebra
affiliated with von Neumann algebra of type II$_1$. An article by Dykema and
Kalton has information highly relevant to the result just mentioned [DK; see
especially, Lemma 4.3 and Corollary 4.8].

The classical foundational representation (irreducible) of the Heisenberg relation
is effected by unbounded operators affiliated with $\B(\H)$ (see [Z] and [KL]).
Results corresponding to Brown and Pearcy's for $\B(\H)$, were found by Halpern [Ha]
for factors of
type II$_\infty$. Brown and Pearcy [BP2], themselves, show
that every operator in a type III factor is a commutator.

\section{Sums of commutators}
The notation `$V_{\R}(E,F)$' denotes a partial isometry in the von Neumann algebra $\R$ with initial projection $E$ and final projection $F$ (both, necessarily in $\R$). There are a number of algebraic, operator identities associated with this notation:
\begin{align}
&V_{\R}(E,F)^*V_{\R}(E,F)=E,
\nonumber\\&V_{\R}(E,F)V_{\R}(E,F)^*=F,
\nonumber\\&V_{\R}(E,F)=V_{\R}(E,F)E=FV_{\R}(E,F),
\nonumber\\&V_{\R}(E,F)^*=EV_{\R}(E,F)^*=V_{\R}(E,F)^*F.\nonumber
\end{align}
If the von Neumann algebra $\R$ under consideration is clear from the context, we write `$V(E,F)$' omitting the subscript `$\R$.'

Suppose $\{E_1,...,E_n\}$ and $\{F_1,...,F_n\}$ are, each, a mutually orthogonal family of $n$ projections in $\R$, where $n$ is possibly, $\aleph_0$; and suppose that $E_j\sim F_j$ (mod $\R$). Then there is a partial isometry $V(E_j,F_j)$ in $\R$, for each $j$ in $\{1,...,n\}$, and $\sum_{j=1}^nV(E_j,F_j)$ is a partial isometry $V(E,F)$ in $\R$, where $E=\sum_{j=1}^nE_j$ and $F=\sum_{j=1}^nF_j$. To see this, note that when $n$ is finite,
\begin{align}
\left(\sum_{j=1}^nV(E_j,F_j)\right)^*\left(\sum_{k=1}^nV(E_k,F_k)\right)&=\sum_{j,k=1}^nV(E_j,F_j)^*V(E_k,F_k)
\nonumber\\&=\sum_{j,k=1}^nV(E_j,F_j)^*F_jF_kV(E_k,F_k)
\nonumber\\&=\sum_{j=1}^nV(E_j,F_j)^*F_jV(E_j,F_j)
\nonumber\\&=\sum_{j=1}^nV(E_j,F_j)^*V(E_j,F_j)
\nonumber\\&=\sum_{j=1}^nE_j=E.\nonumber
\end{align}
Similarly, $\big(\sum_{k=1}^nV(E_k,F_k)\big)\big(\sum_{j=1}^nV(E_j,F_j)\big)^*=F$.

Of course, $V(E,F)$ is one of a set of partial isometries in $\R$, not some definite element in that set. So, $V(E,F)^*$ is a possible choice for $V(F,E)$, usually the choice we shall make when $V(E,F)$ has been chosen.

When $E_1+\cdots+E_n=I=F_1+\cdots+F_n$, $\sum_{j=1}^nV(E_j,F_j)$ is a partial isometry in $\R$ with initial and final projection, both, $I$. Thus $\sum_{j=1}^nV(E_j,F_j)$, which we may reasonably choose for $V(I,I)$, in this instance, is a unitary operator $U$ in $\R$. We note that, in this case, from the identities for partial isometries,
\begin{align}
UE_rU^*&=\left(\sum_{j=1}^nV(E_j,F_j)\right)E_r\left(\sum_{k=1}^nV(E_k,F_k)\right)^*
\nonumber\\&=\left(\sum_{j=1}^nV(E_j,F_j)\right)E_rV(E_r,F_r)^*
\nonumber\\&=\left(\sum_{j=1}^nV(E_j,F_j)\right)V(E_r,F_r)^*
\nonumber\\&=V(E_r,F_r)V(E_r,F_r)^*=F_r, \quad (r\in\{1,...,n\}).\nonumber
\end{align}

The preceding discussion applies when our sums are infinite (that is, when $n=\infty$). In this case, however, we must apply the discussion (as is) to the finite partial sums, and then, apply the results of that application to our infinite sums by taking vector limits of finite sums of vectors and strong-operator limits of finite sums of operators.

We shall make use of these considerations for the purpose of constructing sums of commutators in certain von Neumann algebras, in particular, type II$_1$ von Neumann algebras $\R$ and their
algebras of affiliated operators, denoted by `$\A_{\rm f}(\R)$,' which we call Murray-von Neumann algebras.
\begin{theorem}
Let $\R$ be a von Neumann algebra of type II$_1$. Then the identity operator $I$ in $\R$ is the sum of two commutators in $\A_{\rm f}(\R)$.
\end{theorem}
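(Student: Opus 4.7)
The plan is to construct two unbounded weighted-shift operators $A,B \in \A_{\rm f}(\R)$ whose self-commutators $[A,A^*]$ and $[B,B^*]$ sum exactly to $I$. Unboundedness is essential: the trace obstruction forbids $I$ from being a finite sum of commutators inside $\R$, so individually $[A,A^*]$ and $[B,B^*]$ must lie outside $\R$ even though their sum is the bounded operator $I$.

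First, by iterated halving in $\R$ I fix mutually orthogonal projections $\{E_n\}_{n \geq 0}$ with $\sum_{n} E_n = I$ and center-valued trace $\tau(E_n) = 2^{-(n+1)} I$. Splitting each $E_n = F_n + G_n$ into equal halves of trace $2^{-(n+2)} I = \tau(E_{n+1})$ gives $F_n \sim E_{n+1} \sim G_n$, and the preamble then supplies partial isometries $V_n = V_{\R}(E_{n+1}, F_n)$ and $U_n = V_{\R}(E_{n+1}, G_n)$ in $\R$. With weights $c_n := \sqrt{2^{n+1} - 1}$, set
\[
A \;=\; \sum_{n \geq 0} c_n V_n, \qquad B \;=\; \sum_{n \geq 0} c_n U_n.
\]
Even though $c_n$ grows without bound, the positive parts $|A| = |B| = \sum_n c_n E_{n+1}$ are diagonal operators whose spectral projections lie in $\R$, so $A$ and $B$ are genuine elements of $\A_{\rm f}(\R)$.

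Using the partial-isometry identities reviewed in the preamble, the relations $V_m^* V_n = V_n V_m^* = 0$ for $n \neq m$ (and similarly for the $U_n$) give
\[
A^* A = \sum_n c_n^2 E_{n+1}, \quad A A^* = \sum_n c_n^2 F_n, \quad B^* B = \sum_n c_n^2 E_{n+1}, \quad B B^* = \sum_n c_n^2 G_n.
\]
Inspecting the contribution to each $F_m$ and $G_m$ in turn yields
\[
[A,A^*] + [B,B^*] \;=\; c_0^2 E_0 \;+\; \sum_{m \geq 1} \bigl(c_m^2 - 2 c_{m-1}^2\bigr) E_m,
\]
and the choice $c_0 = 1$ together with the recurrence $c_m^2 = 1 + 2 c_{m-1}^2$, solved by the weights above, makes every coefficient equal to $1$; the sum is therefore $\sum_n E_n = I$.

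The main obstacle is legitimizing these formal manipulations inside $\A_{\rm f}(\R)$, since individually $[A,A^*]$ and $[B,B^*]$ are unbounded differences of unbounded operators whose domains do not a priori coincide. This is handled by Murray and von Neumann's theorem that $\A_{\rm f}(\R)$ is an associative $*$-algebra in which sums and products of affiliated operators are again affiliated and agree with the formal algebraic expressions on any common core. Applied here, all displayed identities are valid on the dense core of vectors with only finitely many nonzero components in $\bigoplus_n E_n \H$, and the identity $[A,A^*] + [B,B^*] = I$ extends from this core to the whole Hilbert space by closure.
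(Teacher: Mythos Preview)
Your argument is correct, and it takes a genuinely different (though closely related) route from the paper's. The paper builds a single unbounded \emph{diagonal} operator $A=\sum_{n}2^{n-1}E_n$ together with two \emph{unitaries} $U_1,U_2$ that permute the dyadic blocks, and then verifies $I=[AU_1,-U_1^*]+[AU_2,-U_2^*]=(U_1^*AU_1-A)+(U_2^*AU_2-A)$. By contrast, you build two unbounded \emph{weighted shifts} $A,B$ along the same dyadic tower and use their self-commutators $[A,A^*]+[B,B^*]$, with the weights $c_n^2=2^{n+1}-1$ chosen so that the telescoping recurrence $c_m^2-2c_{m-1}^2=1$ forces every block coefficient to be $1$. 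Both proofs rest on the same two ingredients---iterated halving in a II$_1$ algebra and the Murray--von Neumann theorem that $\A_{\rm f}(\R)$ is a $*$-algebra under closed sums and products---and both finish by noting that the formal identity holds on the algebraic core $\bigoplus_n E_n\H$ and extends by closure since the right-hand side is bounded. The paper's version has the minor extra feature that one factor in each commutator is bounded (indeed unitary), so it actually shows $I$ is a sum of two commutators of the special shape $[\,\text{affiliated},\,\text{unitary in }\R\,]$; your version trades that for the pleasant symmetry of self-commutators and a slightly more transparent bookkeeping, with no separate ``reservoir'' of projections $F_n$ sitting inside $E_1$.
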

\begin{proof}
Toward demonstrating that the identity operator $I$ in $\R$ is the sum of two commutators in $\A_{\rm f}(\R)$, the Murray-von Neumann algebra of operators affiliated with $\R$, we construct two sequences of mutually orthogonal projections $\{E_n\}_{n=1}^\infty$ and $\{F_n\}_{n=1}^\infty$ in $\R$ such that

(i) $\vee_{n=1}^\infty E_n=I$, \quad $E_1\sim I-E_1$,

 (ii) $\vee_{n=1}^\infty F_n=E_1$,

 (iii) $\tau(E_n)=\frac{1}{2^n}I$,\quad $\tau(F_n)=\frac{1}{2^{n+1}}I$ \quad($n=1,2,...$),

\noindent where $\tau$ is the normalized, center-valued trace ($\tau(I)=I$) on $\R$. This construction is effected by means of repeated ``bisection"
of projections in $\R$. We, first, bisect $I$ to yield $E_1$, equivalent to $I-E_1$ (mod $\R$), then bisect $I-E_1$ to yield $E_2$, equivalent to $I-E_1-E_2$, and so forth. Now, we bisect $E_1$ to yield $F_1$, equivalent to $E_1-F_1$, then bisect $E_1-F_1$ to yield $F_2$, equivalent to $E_1-F_1-F_2$, and so forth. (see [K-R; Lemma 6.5.6].) We, next, bisect each $E_n$ to yield projections $E_n'$ and $E_n''$ (in $\R$) with $E_n'E_n''=0$, $E_n'+E_n''=E_n$ and $E_n'\sim E_n''$ (mod $\R$). For $n=1$, note that
$$
\tau(E_1')=\tau(E_1'')=\frac{1}{4}I
$$
and
$$
\tau(E_2)=\frac{1}{2^2}I=\frac{1}{4}I, \quad\tau(F_1)=\frac{1}{2^{1+1}}I=\frac{1}{4}I.
$$
Therefore, there are partial isometries $V(E_1',E_2)$ and $V(E_1'',F_1)$. Define a partial isometry $V_1$ by
$$
V_1=V(E_1',E_2)+V(E_1'',F_1).
$$
(Note that, in the notation just described, we can choose $V_1$ for $V(E_1, E_2\vee F_1)$.) For $n\geq 2$, note that
$$
\tau(E_n')=\tau(E_n'')=\frac{1}{2}\tau(E_n)=\frac{1}{2^{n+1}}I
$$
and for all $n$, we have
$$
\tau(E_n)=\frac{1}{2^n}I, \quad \tau(F_n)=\frac{1}{2^{n+1}}I.
$$
There are partial isometries $V(E_n',E_{n+1})$ and $V(E_n'',F_n)$. Define a partial isometry $V_n$ by
$$
V_n=V(E_n',E_{n+1})+V(E_n'',F_n),\quad (n\geq 2).
$$
(Note, again, that we can choose $V_n$ for $V(E_n, E_{n+1}\vee F_n)$.)

Now, let $U_1$ be $\sum_{n=1}^\infty V_n$. We note that $\sum_{n=1}^\infty E_n=I$ and that
$$\sum_{n=1}^\infty E_{n+1}\vee F_n=\sum_{n=1}^\infty F_n+\sum_{n=1}^\infty E_{n+1}=E_1+\sum_{n=1}^\infty E_{n+1}=\sum_{n=1}^\infty E_{n}.
$$
For this, we have that $\{E_n\}$ are mutually orthogonal projections, whence  $\sum_{n=1}^\infty E_{n}\leq I$. But
$$
\tau\left(\sum_{n=1}^k E_{n}\right)=\sum_{n=1}^k\tau(E_n)=\sum_{n=1}^k \frac{1}{2^n}I\leq \tau\left(\sum_{n=1}^\infty E_{n}\right)\leq\tau(I)=I,
$$
for all positive integer $k$. Thus $\tau(\sum_{n=1}^\infty E_{n})=I$ and $\sum_{n=1}^\infty E_{n}=I$. Therefore, $U_1(=\sum_{n=1}^\infty V_n)$ is a unitary operator. Similarly, let $U_2$ be $\sum_{n=1}^\infty W_n$, where
$$
W_n=V(E_n',F_n)+V(E_n'',E_{n+1}),\quad (n=1,2,...).
$$
Then $U_2$ is a unitary operator.

We shall define an operator $A$, affiliated with $\R$, on $\H$. We start with $A_0$, where loosely,
$$
A_0=1E_1+2E_2+\cdots+2^{n-1}E_n+\cdots
$$
More precisely, for each finite linear combination $a_1x_1+a_2x_2+\cdots+a_nx_n(=x)$ with $x_j$ in $E_j(\H)$,
$$
A_0x=1a_1x_1+2a_2x_2+\cdots+2^{n-1}a_nx_n.
$$
Since $\sum_{n=1}^\infty E_n=I$, $A_0$ is densely defined. From [K-R; Lemma 5.6.1], $A_0$ has a self-adjoint closure $A$. Note that $\cup_{n=1}^\infty E_n(\H)(=\D_0)$ is a core for $A$ such that $U^*AUx=Ax$ for each $x$ in $\D_0$ and each unitary $U$ in $\R'$. From [K-R;Remark 5.6.3], $A$ is affiliated with $\R$.

Now, for each $x$ in $E_n'(\H)$ ($n=1,2,...$), we have that
\begin{align}
2Ax-{U_1}^*AU_1x-{U_2}^*AU_2x&=2\cdot 2^{n-1}x-{U_1}^*(2^n(U_1(x)))-{U_2}^*(1(U_2(x)))
\nonumber\\&=2^nx-2^nx-1x=-Ix.
\nonumber
\end{align}
It follows that on $E_n'(\H)$,
$$
I={U_1}^*AU_1-A+{U_2}^*AU_2-A=[AU_1,-{U_1}^*]+[AU_2,-{U_2}^*].
$$
The symmetry applies when $x$ is in $E_n''(\H)$. Since $\sum_{n=1}^\infty E_n=I$, the above identity holds on $\cup_{n=1}^\infty E_n(\H)(=\D_0)$, a core for $A$. As $I$ is bounded, the operator $[A\ \hat{\cdot}\ U_1,-{U_1}^*]\ \hat{+}\ [A\ \hat{\cdot}\ U_2,-{U_2}^*]$ has a unique bounded extension, which is, of course, $I$.
\end{proof}

\begin{theorem}
Let $\R$ be a von Neumann algebra with no non-zero finite central projection. Then the identity operator $I$ in $\R$ is the sum of two commutators in $\R$.
\end{theorem}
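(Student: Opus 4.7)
The plan is to use the hypothesis that $\R$ has no non-zero finite central projection to conclude that $I$ is a properly infinite projection in $\R$, and then invoke the standard halving of a properly infinite projection to exhibit $I$ as a sum of two commutators inside $\R$ itself (not just in $\A_{\rm f}(\R)$).

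First, I would translate the hypothesis: saying $\R$ has no non-zero finite central projection is the same as saying every non-zero central projection of $\R$ is infinite, which means $I$ itself is a properly infinite projection. Next I would apply the standard halving lemma for a properly infinite projection (see, e.g., [K-R; Lemma 6.3.3]) to produce orthogonal projections $P_1,P_2 \in \R$ with $P_1+P_2=I$ and $P_1\sim I\sim P_2$ (mod $\R$).

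Using the $V_\R(\cdot,\cdot)$ notation from the beginning of Section 3, I would then pick partial isometries $V_j = V_\R(I,P_j) \in \R$ for $j=1,2$, so that $V_j^*V_j = I$ and $V_j V_j^* = P_j$. A one-line calculation gives
$$[V_j^*,V_j] \;=\; V_j^*V_j - V_jV_j^* \;=\; I - P_j \qquad (j=1,2),$$
and summing the two identities yields
$$[V_1^*,V_1] + [V_2^*,V_2] \;=\; (I-P_1) + (I-P_2) \;=\; 2I - (P_1+P_2) \;=\; I,$$
which is the desired conclusion.

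I do not anticipate any genuine obstacle: the halving of $I$ into two orthogonal projections each equivalent to $I$ is classical, and the commutator identity is self-verifying. The only real content is recognising that the hypothesis is exactly proper infiniteness of $I$, and that for such an $I$ the halving lemma provides the needed decomposition. It is worth emphasising the contrast with Theorem 1 -- in the finite setting, any decomposition $I=\sum E_n$ into orthogonal equivalent projections must shrink geometrically in trace, which is precisely why the diagonal operator $A$ in that proof had to have unbounded spectrum and the argument had to take place in $\A_{\rm f}(\R)$; here proper infiniteness makes two orthogonal copies of $I$ available simultaneously, so a bounded, essentially finite construction suffices.
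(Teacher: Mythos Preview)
Your proof is correct and essentially identical to the paper's own argument: both recognise the hypothesis as proper infiniteness of $I$, invoke the halving lemma to write $I$ as a sum of two orthogonal projections each equivalent to $I$, and then compute $I = (V^*V-VV^*) + (W^*W-WW^*)$ for the corresponding isometries. Your notation ($P_1,P_2,V_1,V_2$) differs only cosmetically from the paper's ($E,I-E,V,W$), and your closing remark contrasting with the type II$_1$ case is a nice addition not present in the original.
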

\begin{proof}
Our hypothesis assures us that $\R$ is ``properly infinite" [K-R II; Definition 6.3.1]. Thus $I$ is properly infinite in $\R$. The ``halving" lemma applies [K-R II; Lemma 6.3.3] and there is a projection $E$ in $\R$ such that $E$ is equivalent to $I$ and $I$ is equivalent to $I-E$. Let $V$ and $W$ be (partial) isometries in $\R$ such that $V^*V=I=W^*W$ and $VV^*=E$, $WW^*=I-E$. Then
$$
V^*V-VV^*+W^*W-WW^*=I-E+I-(I-E)=I.
$$\end{proof}

We recall, again, that the question of whether or not the identity of the Murray-von
Neumann algebra of a II$_1$ factor is a commutator, is still open.

\section*{Acknowledgments}

The third-named author acknowledges funding by the ERC Consolidator Grant No.~681207.

\end{document}